\newcommand{\eq}{\triangleq}
\newcommand{\field}[1]{\mathbb{#1}}
\newcommand{\R}{\field{R}}
\newcommand{\N}{\field{N}}
\newcommand{\U}{\field{U}}
\newcommand{\K}{\field{K}}
\newcommand{\Prob}{\mathbf{Pr}}
\newcommand{\E}{\mathbf{E}}
\newcommand{\hfs}{\hfill\ensuremath{\square}}
\newtheorem{thm}{Theorem}
\newtheorem{defi}{Definition}
\newtheorem{lem}{Lemma}
\newtheorem{rem}{Remark}
\newtheorem{ex}{Example}
\newtheorem{ass}{Assumption}
\title{Stability of Sequence-Based Control with Random Delays and  Dropouts}
 \author{Daniel~E.~Quevedo,~\IEEEmembership{Member,~IEEE,} and 
   Isabel Jurado
  \thanks{Daniel Quevedo is  with the School of
    Electrical Engineering \& 
      Computer Science, The University of Newcastle, NSW
      2308, Australia; e-mail:
      dquevedo@ieee.org. Isabel Jurado is with Departamento de Ingenier\'{\i}a de
      Sistemas y Autom\'atica, Escuela Superior de Ingenieros, Universidad de
      Sevilla, Spain; e-mail: ijurado@cartuja.us.es. This research was supported
      under Australian Research Council's 
  Discovery Projects funding scheme (project number DP0988601).} 
}
\begin{document}
\maketitle

\begin{abstract}
We study networked control of non-linear systems where system states and
tentative plant input sequences  are transmitted over  unreliable communication
channels. The  sequences are calculated recursively by using a pre-designed
nominally stabilizing state-feedback control mapping to plant state
predictions. The controller does not require receipt  
acknowledgments   or   knowledge of delay or dropout
distributions. For the  i.i.d.\  case, in which case the numbers of
consecutive dropouts are geometrically distributed, we show how the resulting closed loop system can be
modeled as a Markov non-linear jump system and establish sufficient conditions
for stochastic stability.   
\end{abstract}

\section{Introduction}
\label{sec:introduction}
There exists significant interest in implementing closed loop
control systems using   wireless  
communication networks. Such \emph{Networked Control Systems} (NCSs) are easier
to install and to maintain than their traditional counterparts equipped with dedicated
hardwired links. However, using off-the-shelf network technology for 
closed-loop control gives rise to various challenges. In particular, 
practical communication channels are commonly
affected by packet dropouts and time-delays  (for example, due to fading and congestion)  
\cite{chejoh11}. 
 A variety of interesting approaches have been proposed for the analysis and design of
NCSs. One class of design methods takes advantage of the fact that in many 
communication protocols data is sent in large time-stamped packets. This opens
the possibility to conceive \emph{Sequence-based
  Control} (SBC) formulations, where,  through buffering at the receiver, time delays and packet 
dropouts are to some extent compensated for. Recent works on SBC
for network models with  bounded packet loss and/or bounded time delays include
\cite{tansil07,polliu08,munchr08,liumun09,wanliu10,pinpar11,quenes11a}, which 
adopt worst-case methods for stability analysis. 
As shown in the above works, the (somewhat artificial) assumption that  network
artifacts are bounded is required in order to
be able to establish deterministic stability guarantees of SBC.   Given the widespread adoption of
stochastic models with unbounded support in both the communications 
and also in the systems control community (see,
e.g.,\cite{goldsm05,bergal02,schena08,shi09b,linlem04,guphas07}), it is  
surprising that stochastic models with unbounded support
have only recently been taken into account when studying stability of SBC. In particular,\cite{queost11}
investigates quantized control of  LTI systems with independent and
identically distributed (i.i.d.) dropouts
and establishes sufficient condition for mean-square stability of the NCS,
whereas\cite{quenes12a} focuses on stochastic stability of nonlinear plant models
 in the presence of Markovian dropouts. The work \cite{fisdol13a} studies
 an SBC formulation
 for LTI systems, where the network  introduces possibly unbounded 
dropouts and delays, under the assumption that  acknowledgments of receipt are 
always available.

\begin{figure}[t]
  \centering
  \input{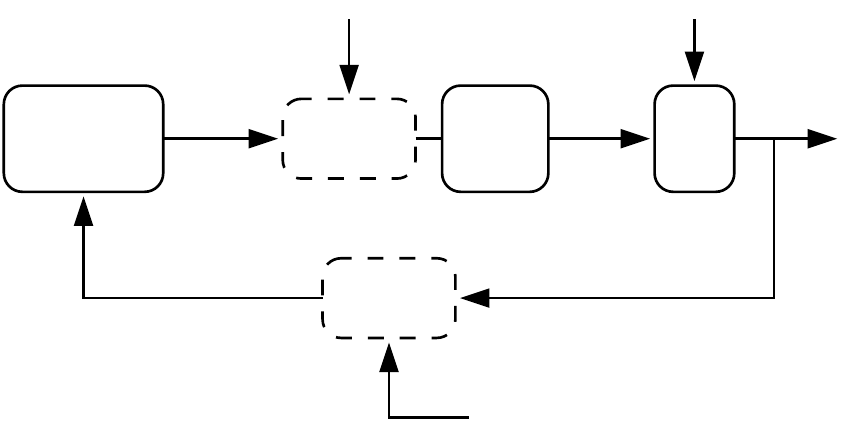_t}
  \caption{NCS architecture with buffer at the actuator node}
  \label{fig:setup_cdc12}
\end{figure}

\par In the present work, we study an SBC formulation for discrete-time
non-linear plant models, where plant states and tentative input sequences are transmitted
over an unreliable communication channel, see Fig.~\ref{fig:setup_cdc12}. As in
our previous works\cite{queost11,quenes12a}, the
control scheme studied
does not require  acknowledgments of receipts or any knowledge of the packet 
delay/dropout distributions.   Instead, control sequences are designed by 
applying a pre-designed nominally stabilizing state-feedback control mapping to
predicted plant 
states. Thus, control values are calculated sequentially, reutilizing the
already computed values. This is computationally attractive and differs from our
previous works  \cite{queost11,quenes12a}, which examine the design of
packetized optimization-based model predictive
controllers. 
Through the development of a Markovian model of the NCS at hand, we show how our recent results
in\cite{quegup11a} can be adapted to the present situation and establish sufficient conditions for
stochastic stability of the NCS class considered. Our current results are applicable to
input-constrained open-loop unstable plants and to networks    which introduce
i.i.d.\ time-delays and i.i.d.\   packet dropouts, in which case the number of
consecutive dropouts have a geometric distribution.

\paragraph*{Notation}
\label{sec:notation}
We write 
$\R_{\geq 0}$ for $[0,\infty)$, $\N$ for $\{1,
2, \ldots\}$, and $\N_0$ for $\N \cup \{0\}$. 
The $p\times p$ identity matrix is
denoted $I_p$. For the all zeroes matrix in $\R^{p\times
  q}$, we write $0_{p\times q}$. The unit
column vector in $\R^{p}$ is $\mathbf{1}_{p}=
[\begin{matrix}
  1&0&\dots&0
\end{matrix}]^T$,
whereas $\mathbf{0}_{p}=0\cdot\mathbf{1}_{p}$. 
%
The norm of a vector $x$ is denoted $|x|$.  A
function $\varphi\colon \R_{\geq 0}\to \R_{\geq 0}$ is of
\emph{class-}$\mathscr{K}_\infty$ ($\varphi \in \mathscr{K}_\infty$), if it is
continuous, zero at zero, strictly increasing, and  unbounded.
$\Prob\{\cdot\}$ refers to probability and $\E\{\cdot\}$,  to expectation.

\section{NCS Architecture} 
\label{sec:pack-pred-contr}
%
We consider   (possibly unstable)  plant models with state
$x\in \R^n$ and constrained input
$u\in \U\subseteq \R^p$, with $\mathbf{0}_p\in\U$. The plant state trajectory obeys
the recursion:
\begin{equation}
  \label{eq:34}
  x(k+1) = f(x(k),u(k)),\quad k\in\N_0,
\end{equation}
where $f(\mathbf{0}_n,\mathbf{0}_p)=\mathbf{0}_n$. The initial state $x(0)$ is arbitrarily distributed.
 Throughout this work, we will assume that the nominal plant model~\eqref{eq:34}
is globally stabilizable via state
feedback. We adopt an emulation-based approach (cf.,\cite{heetee10}), for a pre-designed
controller $\kappa$, which is nominally stabilizing   in the absence of network
effects (see Section~\ref{sec:example} for a specific example):
\begin{ass}
  \label{ass:CLF}
  There exist $\kappa \colon \R^n\to\U$,  $V\colon
\R^n\to\R_{\geq 0}$,  $\varphi_1, \varphi_2\in\mathscr{K}_\infty$,  
$\rho\in [0,1)$, 
 such that
\begin{equation}
  \label{eq:3b}
  \begin{split}
    \varphi_1(|x|)\leq V(x)&\leq \varphi_2(|x|),\\
    V\big( f(x,\kappa(x))\big) &\leq \rho V(x),  \quad  \forall x\in\R^n.
  \end{split}
\end{equation}
\end{ass}

\subsubsection{Network Effects and Buffering}
 \label{sec:network-effects}
We  focus on a situation where transmissions from the sensor to the
actuator are over a wireless channel. Due to fading and
interference from other users, random dropouts will
occur\cite{queahl10,schsin07,queahl12,queahl13a}. For our purposes, their effect is described by
introducing the binary transmission success 
process $\{\gamma(k)\}_{k\in\N_0}$, where (see Fig.~\ref{fig:setup_cdc12})
\begin{equation}
\label{eq:10}
  \gamma(k)=
  \begin{cases}
    1 &\text{if the controller  receives ${x}(k)$  at time $k$,}\\
    0 & \text{otherwise.}
  \end{cases}
\end{equation}
The controller calculates  control commands
$\vec{u}(k)$ only at the times when data arrives, i.e., when
$\gamma(k)=1$. Transmissions from controller to actuator use a shared network. Fading, interference and medium access
effects introduce random time-delays and dropouts in the
controller-actuator link\cite{ramsan11b}.   Here, we introduce
$\{\tau(k)\}_{k\in\N_0}$, where
\begin{equation}
\label{eq:11}
  \tau(k)=
  \begin{cases}
    i &\text{if $\gamma(k)=1$ and $\vec{u}(k)$ is received at time $k+i$}\\
 &\qquad\text{at the actuator node,}\\
    \infty & \text{otherwise.}
  \end{cases}
\end{equation}

\par As foreshadowed in the introduction, we are interested in SBC, where
$\vec{u}(k)$ contains
tentative plant inputs  for a finite, and fixed, number of $N$ current and future time
steps. To make the predictive nature of the controller explicit, we write
\begin{equation}
  \label{eq:4}
  \vec{u}(k) =
  \begin{bmatrix}
    u(k;k)\\u(k+1;k)\\ \vdots\\ u(k+N-1;k)
  \end{bmatrix}\in\U^N \subseteq \R^{pN},
\end{equation}
where the values $u(k+\ell;k)$ are \emph{tentative controls} for time $k+\ell$,
calculated at time $k$.
At the  actuator side, the received packets are buffered, see
Fig.~\ref{fig:setup_cdc12}. The
buffer uses the time-stamps to only store (parts of) the most recently generated packets received; older
packets are discarded. The most recently generated packet
contained in the buffer at time $k$ is $\vec{u}(T(k))$, where
\begin{equation}
  \label{eq:3}
 T (k) \eq  \max\{\ell\in\N_0 \colon \ell + \tau(\ell)\leq k\}.
\end{equation}
If $T(k)> k-N$, then
 the buffer state at time $k$ contains elements of $\vec{u}(T(k))$ as per
\begin{equation}
  \label{eq:5}
  b(k)=
    \begin{bmatrix}
    u(k;T(k))\\u(k+1;T(k))\\ \vdots\\ u(T(k)+N-1;T(k))
  \end{bmatrix}
\end{equation}
and the plant input is set to $u(k)=u(k;T(k))$. If  $T(k)\leq k-N$,  then the
buffer is empty, $b(k)=\emptyset$, and the plant input is set to zero.\footnote{Alternatively, one could also hold the previous
  control input; cf.,\cite{schena09}.} We will assume that the buffer is
initially empty: 
\begin{equation}
  \label{eq:19}
  b(0)=\emptyset,\quad u(0)=\mathbf{0}_p.
\end{equation}

\subsubsection{Sequence-Based Controller}
\label{sec:contr-sequ-design}
Throughout this work, we will focus on  the  
challenging \emph{UDP-like} case where  the
network provides no  acknowledgments of receipt. Therefore, 
the buffer contents are not available for
control calculations. 
The control sequences 
$\vec{u}(k)$ in~(\ref{eq:4})  are obtained by applying the mapping $\kappa$
of~(\ref{eq:3b}) on predicted nominal plant states, say $x(k+j;k)$, see~(\ref{eq:34}): 
\begin{equation}
  \label{eq:31}
  \begin{split} 
    u(k+j;k) &= \kappa (x(k+j;k)), \quad j\in\{0,1,\dots,N-1\},\\
    x(k+j+1;k)&=
    f(x(k+j;k),u(k+j;k)),\quad x(k;k)=x(k).
  \end{split}
\end{equation}
  With SBC, it can be expected that larger horizon lengths $N$ will give better performance, since more
delay and dropout scenarios can be compensated for. This will become apparent
in our subsequent results, in particular, those included in Section~\ref{sec:example}.

\section{NCS Design Model}
\label{sec:ncs-model}
Due to the presence of time delays and the buffering procedure adopted, future plant
states depend not only upon the current plant  
state $x(k)$, but also on delay realizations and  control packets transmitted
after $T(k)\leq 
k$. To model the situation, we  introduce the overall  NCS state
 $\theta(k)\eq
  \big[\begin{matrix}
    x(k)^T&
    U(k)^T
  \end{matrix}\big]^T$,
where 
\begin{equation}
  \label{eq:12}
  \begin{split}
    U(k)&\eq
    \begin{bmatrix}
      C_1 \vec{u}(k-1)\\C_2 \vec{u}(k-2)\\ \vdots\\ C_{\bar{N}-1}
      \vec{u}(k-\bar{N}+1)
    \end{bmatrix}\! \in \R^{\nu p}, \\ C_i&\eq
    \begin{bmatrix}
      0_{( N-i)p \times ip} & I_{( N-i)p}
    \end{bmatrix}\!, \\
    \nu&\eq(\bar N-1)\bar N /2,
  \end{split}
\end{equation}
with 
     $\bar{N} \eq \max\{\Gamma \cap \{0,1,\dots,N\}\}$ and
where
$   \Gamma \eq \{i\in \N_0  \colon \Prob\{\tau(k)=i\}>0\}$. 
\par  Our subsequent analysis makes use  of the \emph{buffer
  length} process $\{\lambda(k)\}_{k\in\N_0}$, defined via, 
\begin{equation}
  \label{eq:6}
  \lambda(k)\eq
  \begin{cases}
    \max(0,T(k)+N-k) &\text{if $k\in \N$},\\
    0 &\text{if $k =0$},
  \end{cases}
\end{equation}
where $T(k)$ is as in~(\ref{eq:3}).  In view of~(\ref{eq:5}), if $\lambda(k)>0$,
then we have $b(k)\in\U^{\lambda(k)}$, whereas $\lambda(k)=0$ refers to an empty buffer,
$b(k)=\emptyset$. The  plant inputs
are  given by:
\begin{equation}
  \label{eq:7}
  u(k)=
  \begin{cases}
    u(k;\lambda(k)+k-N)&\text{if $\lambda(k)>0$,}\\
    \mathbf{0}_p& \text{if $\lambda(k)=0$.}
  \end{cases}
\end{equation}
 Note that $\lambda(k)$ also serves to describe the age of the buffer content;
 cf.,\cite{pinpar11}.

\par Expression~(\ref{eq:7}) now gives that, if $\lambda(k)>0$, then
$T(k)=\lambda(k)+k-N$, thus
\begin{equation*}
    u(k)=u(k;\lambda(k)+k-N)=G_{\lambda(k)}U(k) + E_{\lambda(k)} \vec{u}(k)
\end{equation*}
where
\begin{equation*}
  \begin{split}
    G_{\lambda(k)}&=\begin{cases}
          0_{p\times \nu p}&\text{if $\lambda(k)=N$,}\\
      \begin{bmatrix}
        0_{p\times   \eta p} &I_{p} &0_{p\times (\nu -1-\eta) p}
      \end{bmatrix}&\text{if $\lambda(k)<N$,}
    \end{cases}\\
    E_{\lambda(k)}&=
    \begin{cases}
      \begin{bmatrix}
        I_{p} &0_{p\times (N-1)p}
      \end{bmatrix}&\text{if $\lambda(k)=N$,}\\
      0_{p\times Np}&\text{if $\lambda(k)<N$,}
    \end{cases}\\
    \eta&=N-\lambda(k)+(N-\lambda(k)-1)\bar N \\
    &- {(N-\lambda(k))(N-\lambda(k)-1)}/{2} .
  \end{split}
\end{equation*}
On the other hand, irrespective of $\lambda(k)$, we have
\begin{equation*}
  \begin{split}
    U(k+1) &= S U(k) +
    \begin{bmatrix}
      I_p\\ 0_{(\nu -1)p\times p}
    \end{bmatrix}
    \vec{u}(k),\\
    S&\eq \begin{bmatrix} 0_{p\times \nu p}\\ \begin{bmatrix}
        I_{(\nu-1)p} & {0}_{(\nu-1)p \times p}
      \end{bmatrix}
    \end{bmatrix}.
  \end{split}
\end{equation*}
The above expressions lead to the jump non-linear NCS model:
\begin{equation}
\label{eq:16}
\begin{split}
    \theta(k+1) &= F_{\lambda(k)}(\theta(k),\vec{u}(k)),\quad k\in\N_0,\\
     F_{\lambda}(\theta,\vec{u}) &\eq
    \begin{bmatrix}
      f(x,G_{\lambda}U + E_{\lambda} \vec{u})\\
      S U +
      \begin{bmatrix}
        I_p\\ 0_{(\nu p-p)\times p}
      \end{bmatrix}
      \vec{u}
    \end{bmatrix},\quad \theta=
    \begin{bmatrix}
      x\\ U
    \end{bmatrix}.
  \end{split}
\end{equation}
In the following section, we will study  a particular instance of the NCS
model obtained. Section~\ref{sec:stochastic-stability} then incorporates the control 
sequences $\vec{u}(k)$, designed as in~\eqref{eq:31},  to study  
stability of the NCS. 

\section{The i.i.d.\ Case}
\label{sec:i.i.d.-case}
In the sequel, we  focus on networks, where 
delays and dropouts are uncorrelated and make the following two assumptions.
\begin{ass}
  \label{ass:iid}
  The transmission success process $\{\gamma(k)\}_{k\in\N_0}$ is i.i.d., with
  $$ \Prob \{\gamma(k)=1\}=q.$$
   The process $\{\tau(k)\}_{k\in\N_0}$ is conditionally i.i.d., with delay distribution
 $$\Prob \{\tau(k)=i\,|\, \gamma(k)=1\}=p_i,\quad i\in\N_0,$$ 
and dropout probability $\Prob \{\tau(k)=\infty\,|\, \gamma(k)=1\}=p_\infty$.\hfs
\end{ass}
A direct consequence of our model is that the unconditional distribution
of $\{\tau(k)\}_{k\in\N_0}$ satisfies, for all $i\in\N_0$,
\begin{equation}
  \label{eq:13}
  \begin{split}
     \Prob \{\tau(k)=i\} &= \Prob\{\gamma(k)=1\} \Prob \{\tau(k)=i\,|\, \gamma(k)=1\}
      = q p_i,\\
     \Prob \{\tau(k)=\infty\}&= q p_\infty
     +  \Prob\{\gamma(k)=0\}\Prob \{\tau(k)=\infty\,|\, \gamma(k)=0\}\\
     &\qquad =  q\cdot p_\infty +  (1-q)
  \end{split}
\end{equation}
 Note that, if $p_\infty>0$ or $q<1$, then the number of consecutive packet
 dropouts in the loop has
unbounded support. Thus, even though the controller transmits  sequences,
 with non-zero probability the plant will be left in open loop during
 intervals of infinite length. This begs the question of  stability of
 this NCS architecture, which we address in Section~\ref{sec:stochastic-stability}.

\begin{rem}[Special cases]
  A particular case of our model results when $q=1$ and $p_i=0$ for all $i\in \N$, in
which case  the controller to actuator network reduces to an erasure channel with dropout rate
$p_\infty=1-p_0$. The latter model was adopted for examining SBC over bit-rate limited
channels in\cite{queost11}.  Another instance studied in previous literature is where $q=1$ and time-delays have a
bounded support, i.e., there exists $\tau^{\text{max}}\in \N_0$,
such that
\begin{equation}
  \label{eq:1}
  p_i = 0,\quad \forall i>\tau^{\text{max}},\; i\not=\infty,
\end{equation}
in which case the dropout rate is given by $$p_\infty = 1
-\sum_{i=0}^{\tau^{\text{max}}}p_i.$$ For the special case of~(\ref{eq:1}) with
no dropouts, we have $\sum_{i=0}^{\tau^{\text{max}}}p_i=1$ as considered, for
example, in\cite{liumu06,tansil07,quenes11a}. \hfs 
\end{rem}

  A key property of the NCS model obtained is that, for    delays and dropouts
  satisfying Assumption~\ref{ass:iid}, the process 
$\{\lambda(k)\}_{k\in\N_0}$ introduced in~(\ref{eq:6}) 
constitutes a homogeneous (finite) Markov Chain. Its transition probabilities
$$  p_{ij}\eq\Prob\{\lambda(k+1)=j \,|\,\lambda(k)=i \} $$
are as follows:\footnote{A related characterization can also be found in
  parallel work documented in\cite{fisdol13a}.}
\begin{lem}
\label{lem:Markov}
 If Assumption~\ref{ass:iid} holds, then the transition probabilities
 of $\{\lambda(k)\}_{k\in\N_0}$  satisfy: 
  \begin{equation}
    \label{eq:21}
    p_{ij} =
      \begin{cases}
        q\cdot p_{N-j}  &\text{if $1\leq i\leq j$}\\
        1-q\sum_{\ell=0}^{N-i} p_\ell &\text{if $i= j+1$}\\
        0 &\text{if $i\geq j+2$}\\
        p_{1j} &\text{if $i=0$,}
      \end{cases}
\end{equation}
where $ i,j\in\{0,1,\dots,N\}$.
\end{lem}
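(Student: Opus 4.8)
The plan is to compute the one-step transition probabilities directly from the definition of $\lambda(k)$ in~\eqref{eq:6} by carefully tracking how the buffer length evolves under a new transmission. Recall that $\lambda(k)=\max(0,T(k)+N-k)$, so the age/length of the most recently usable packet decreases by one each time step unless a \emph{fresher} packet arrives. Concretely, between times $k$ and $k+1$, two things can happen: either no packet generated at time $k$ is successfully delivered so early that it supersedes the current buffer content (in which case $T(k+1)=T(k)$ and the length simply decrements), or a new packet does arrive and resets $T(k+1)$ to a larger value. I would therefore first express the event $\{\lambda(k+1)=j\}$ in terms of the arrival/delay variables $\gamma$ and $\tau$ at the relevant times, and then invoke Assumption~\ref{ass:iid} to evaluate the resulting probabilities.

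First I would treat the generic case $1\le i\le N$. When $\lambda(k)=i>0$, we have $T(k)=i+k-N$. For the buffer length to \emph{increase or stay} at some value $j\ge i$, a packet generated at time $k$ must be received in time to become the new most-recent packet, i.e. the transmission at time $k$ must succeed and have delay $\tau(k)=N-j$, since $\lambda(k+1)=T(k+1)+N-(k+1)=k+N-j+N-(k+1)\dots$ — here I would line up $T(k+1)=k$ with $\tau(k)=N-j$ to get exactly $\lambda(k+1)=j$. By~\eqref{eq:13} this event has probability $q\,p_{N-j}$, giving the first line of~\eqref{eq:21}. The second case $i=j+1$ is the complementary ``decrement'' event: no sufficiently fresh packet arrives, so the old content ages by one. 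This occurs precisely when the transmission at time $k$ either fails ($\gamma(k)=0$) or is delivered too late, i.e. $\tau(k)\ge N-j+1 = N-i+1$; summing probabilities, the chance that a \emph{useful overriding} packet arrives is $q\sum_{\ell=0}^{N-i}p_\ell$, so the decrement probability is its complement $1-q\sum_{\ell=0}^{N-i}p_\ell$. The impossibility line $p_{ij}=0$ for $i\ge j+2$ reflects that the length can drop by at most one per step, which follows immediately because in the absence of a new packet $T(\cdot)$ is unchanged and $\lambda$ decreases by exactly one.

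For the boundary case $i=0$ (empty buffer), I would argue that $\lambda(k)=0$ means $T(k)\le k-N$, but conditionally on Assumption~\ref{ass:iid} the arrival of the packet generated at time $k$ depends only on $\gamma(k)$ and $\tau(k)$, which are independent of the past. A packet generated at time $k$ with $\tau(k)=N-j$ lands at time $k+N-j$ and yields $\lambda(k+1)=j$ exactly as in the $i\ge 1$ analysis, since what matters for the next step is solely whether a fresh packet supersedes whatever is (or is not) currently buffered. Hence the row $i=0$ coincides with the row $i=1$, giving $p_{0j}=p_{1j}$. Finally, I would note that because $\lambda(k+1)$ is a deterministic function of $\lambda(k)$ and the i.i.d.\ pair $(\gamma(k),\tau(k))$, which is independent of the history, the chain is both Markov and homogeneous, as claimed just before the lemma.

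The main obstacle I anticipate is the bookkeeping that aligns $T(k)$, the delay $\tau(k)$, and the resulting $\lambda(k+1)$ so that the ``fresh packet overrides old content'' event is characterised \emph{exactly}, with no double counting between the increase case and the decrement case. In particular one must verify that a newly arriving packet always dominates the aging buffer content whenever it arrives, so that the two cases in~\eqref{eq:21} are genuinely complementary over the full range of $\tau(k)$; this hinges on the buffer's time-stamping rule in~\eqref{eq:5} that discards older packets, and it is the one place where a careless index shift would corrupt the whole transition matrix.
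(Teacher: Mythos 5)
Your overall case structure (the four lines of \eqref{eq:21}, complementarity for the decrement case, ``at most one value lost per step'' for $i\geq j+2$, and $p_{0j}=p_{1j}$ by independence from the past) mirrors the paper's. However, the core computation for $1\leq i\leq j$ contains a genuine gap: you identify the event $\{\lambda(k+1)=j\}$ with ``the packet generated at time $k$ succeeds with delay $\tau(k)=N-j$,'' and your own index check betrays the problem --- $T(k+1)=k$ forces $\lambda(k+1)=N-1$, not an arbitrary $j$. The packet that actually determines $\lambda(k+1)=j$ is the one generated at time $T(k+1)=k+1-(N-j)$, and the event is a \emph{joint} condition on several delays: $\tau(k+1-(N-j))=N-j$ exactly (it must arrive precisely at time $k+1$; if it had arrived earlier it would contradict $\lambda(k)=i\leq j$), together with $\tau(\ell)>k+1-\ell$ for every $\ell\in\{k+2-(N-j),\dots,k+1\}$ so that no fresher packet supersedes it. Consequently $q\,p_{N-j}$ is not read off from \eqref{eq:13} as the unconditional probability of a single delay value; it emerges only after writing the conditional probability given $\{\lambda(k)=i\}$ (which already constrains $\tau(\ell)>k-\ell$ for the packets generated after $T(k)$) as a product of ratios
\begin{equation*}
\frac{\Prob\{\tau=N-j\}}{\Prob\{\tau>N-j-1\}}\prod_{s=0}^{N-j-1}\frac{\Prob\{\tau>s\}}{\Prob\{\tau>s-1\}}
\end{equation*}
and observing that it telescopes to $q\,p_{N-j}$. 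This is exactly the ``careful case-by-case analysis'' the paper carries out in Appendix~A, and it is the obstacle you flagged at the end without resolving.

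The downstream cases inherit the gap but are structurally sound: the decrement probability $1-q\sum_{\ell=0}^{N-i}p_\ell$ does follow by complementarity once the first line is correctly established (the events $\{\lambda(k+1)=j\}$, $j\geq i$, are disjoint with total probability $q\sum_{\ell=0}^{N-i}p_\ell$), although the paper instead computes it directly by the same telescoping-product technique. Your arguments for $i\geq j+2$ and for $p_{0j}=p_{1j}$ are essentially the paper's. To complete the proof you need to (i) fix the identification of the superseding packet's generation time as $k+1-(N-j)$, and (ii) carry out the conditional-probability product over all packets generated between $T(k)+1$ and $k+1$, using the i.i.d.\ property of $\{\tau(k)\}$ from Assumption~\ref{ass:iid} to factor it and the conditioning information to make it telescope.
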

\begin{proof}
See Appendix~\ref{proof:markov}.
\end{proof}

\begin{figure}[t]
  \centering
  \input{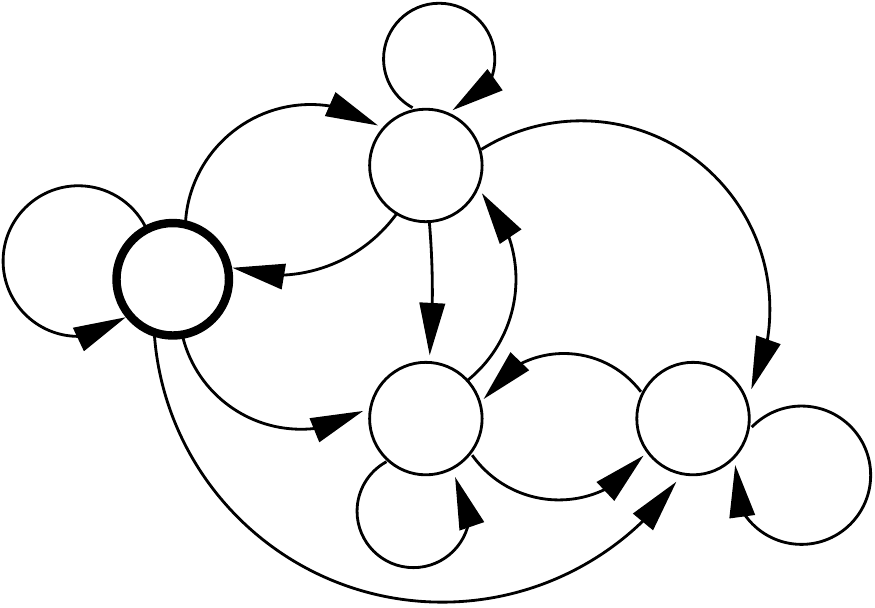_t}
  \caption{Transition graph of $\{\lambda(k)\}_{k\in\N}$ in~(\ref{eq:22}). Here, 
    $p_{10}=p_{00}=1-p_0-p_1-p_2$ and $p_{21}= 1-p_0-p_1$.}
  \label{fig:transitions}
\end{figure}

\begin{ex}
   With horizon length $N=3$ and $q=1$, expression~(\ref{eq:21}) yields the transition matrix
  \begin{equation}
    \label{eq:22}
    [p_{ij}]=
    \begin{bmatrix}
      1-p_0-p_1-p_2 & p_2 & p_1 & p_0\\
      1-p_0-p_1-p_2 & p_2 & p_1 & p_0\\
      0 & 1-p_0-p_1 &  p_1 & p_0\\
       0 &  0 & 1-p_0 & p_0
    \end{bmatrix}.
  \end{equation}
  Fig.~\ref{fig:transitions} illustrates the associated transition graph.\hfs
\end{ex}


We will next adapt
 ideas, which were used
in\cite{quegup11a} for the analysis of  control loops without
time-delays or dropouts, but where
processor availability is random. 
Our analysis differentiates between time steps where $\lambda(k)=0$ and, thus, the
buffer is empty, and those time instances where the buffer contains plant
inputs which were calculated by the controller. For future reference, we denote the
 times when $b(k)=\emptyset$  via
$\K=\{k_i\}_{i\in\N_0}$, where 
$k_{0}=0$, see~(\ref{eq:19}), and
\begin{equation}
\label{eq:8}
  k_{i+1} = \inf \big\{ k\in\N \colon k>k_i,\;  \lambda(k)=0\big\},\quad i\in\N_0.
\end{equation}
We also introduce the  process
$\{\Delta_i\}_{i\in\N_0}$, where 
$$  \Delta_i\eq k_{i+1}-k_i,  \quad \forall (k_{i+1}, k_i) \in\K \times\K$$  are the number of time steps between  consecutive
instants in $\K$. This process thereby
amounts to  the
first return times of    $\{\lambda(k)\}_{k\in\N_0}$ to state $0$ and is
i.i.d.\cite{kemsne60}; see also Fig.~\ref{fig:transitions}. The distribution of
$\{\Delta_i\}_{i\in\N_0}$ can be
characterised as follows:
\begin{lem}
  \label{lem:return}
  Suppose that Assumption~\ref{ass:iid} holds and define
\begin{equation}
  \label{eq:28}
    \sigma^T\eq q\begin{bmatrix}
         p_{N-1} & \dots &p_1 &p_0
       \end{bmatrix},\;
    {\mathcal{P}}\eq
    \begin{bmatrix}
      p_{11} & p_{12}& \dots &p_{1N}\\
      \vdots &\vdots & &\vdots \\
      p_{N1} & p_{N2}& \dots &p_{NN}      
    \end{bmatrix},
\end{equation}
with $p_{ij}$ as in~(\ref{eq:21}). Then
  \begin{equation}
    \label{eq:18}
    \Prob\{\Delta_i=j\}
     =
     \begin{cases}
        1-q\sum_{\ell=0}^{N-1} p_\ell&\text{if $j=1$,}\\
      \big(1-q\sum_{\ell=0}^{N-1} p_\ell\big) \sigma^T
    {\mathcal{P}}^{j-2}
       \mathbf{1}_N
       &\text{if $j\geq 2$.}
     \end{cases}
   \end{equation}
\end{lem}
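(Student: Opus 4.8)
The plan is to treat $\Delta_i$ as a first-return time of the finite Markov chain $\{\lambda(k)\}_{k\in\N_0}$ to state $0$ and to exploit the block structure of the transition probabilities established in Lemma~\ref{lem:Markov}. Since $\{\lambda(k)\}_{k\in\N_0}$ is homogeneous and, by the regeneration property at state $0$ already noted above, $\{\Delta_i\}_{i\in\N_0}$ is i.i.d., it suffices to compute $\Prob\{\Delta_i=j\}$ for a single generic excursion started from $\lambda(k_i)=0$; the resulting expression is then independent of $i$.

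First I would read off three ingredients from~\eqref{eq:21}. (i) The one-step probabilities out of state $0$ into the transient set $\{1,\dots,N\}$: using the $i=0$ row together with $p_{1j}=q\,p_{N-j}$, one gets $p_{0j}=q\,p_{N-j}$ for $1\le j\le N$, so the row vector of these probabilities is exactly $\sigma^T=q[\,p_{N-1}\ \cdots\ p_0\,]$ from~\eqref{eq:28}. (ii) The transitions among the transient states $\{1,\dots,N\}$ are collected in the sub-stochastic matrix $\mathcal{P}$ of~\eqref{eq:28}. (iii) The one-step probabilities of returning to $0$: the cases $i=j+1$ and $i\ge j+2$ in~\eqref{eq:21} give $p_{i0}=0$ for all $i\ge 2$, while $p_{10}=1-q\sum_{\ell=0}^{N-1}p_\ell$. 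Hence a return to $0$ is possible only from state $1$, and the column vector of one-step return probabilities from the transient states equals $\big(1-q\sum_{\ell=0}^{N-1}p_\ell\big)\mathbf{1}_N$, since $\mathbf{1}_N=[\,1\ 0\ \cdots\ 0\,]^T$ selects precisely state $1$.

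With these ingredients the computation is a path decomposition. For $j=1$ the event $\{\Delta_i=1\}$ is just $\{\lambda(k_i+1)=0\}$, whose probability is $p_{00}=p_{10}=1-q\sum_{\ell=0}^{N-1}p_\ell$, giving the first branch of~\eqref{eq:18}. For $j\ge 2$, the event $\{\Delta_i=j\}$ means the chain leaves $0$ at the first step, remains inside $\{1,\dots,N\}$ for the next $j-2$ steps, and re-enters $0$ on step $j$. Summing over all admissible paths factors, by the Markov property, into the product of the initial transition vector $\sigma^T$, the $(j-2)$-th power of the transient matrix $\mathcal{P}$, and the return column $\big(1-q\sum_{\ell=0}^{N-1}p_\ell\big)\mathbf{1}_N$, yielding $\Prob\{\Delta_i=j\}=\big(1-q\sum_{\ell=0}^{N-1}p_\ell\big)\,\sigma^T\mathcal{P}^{\,j-2}\mathbf{1}_N$, which is the second branch of~\eqref{eq:18}.

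The computation itself is routine once the three objects are correctly identified from~\eqref{eq:21}; the only real care is needed in step (iii), namely recognizing that the lower-Hessenberg structure of the chain (it can drop toward $0$ only one level at a time) forces every return to $0$ to pass through state $1$, so that $\mathbf{1}_N$ is exactly the right column and the scalar $1-q\sum_{\ell=0}^{N-1}p_\ell$ can be pulled out front. I would double-check the exponent $j-2$ — one step to leave $0$, $j-2$ interior transient transitions, and one step to re-enter $0$ — against the small cases, e.g.\ $\Prob\{\Delta_i=2\}=\big(1-q\sum_{\ell=0}^{N-1}p_\ell\big)q\,p_{N-1}$, which is the direct probability of the excursion $0\to 1\to 0$.
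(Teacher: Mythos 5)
Your proposal is correct and follows essentially the same route as the paper: the paper organizes the computation through first-passage times $\delta_m$ from each transient state $m$ to $0$ and a one-step recursion that yields $\mathcal{P}^{j-1}$ acting on the base-case vector $\big(1-q\sum_{\ell=0}^{N-1}p_\ell\big)\mathbf{1}_N$, then prepends the exit row $\sigma^T$, which is exactly your exit--transient--return factorization $\sigma^T\mathcal{P}^{j-2}\big(1-q\sum_{\ell=0}^{N-1}p_\ell\big)\mathbf{1}_N$. You also correctly handle the two subtle points (that $\mathbf{1}_N$ is the paper's \emph{first unit vector}, selecting state $1$ as the only entry point back to $0$, and that $p_{0j}=p_{1j}$ makes the exit row equal $\sigma^T$), so nothing is missing.
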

\begin{proof}
  See Appendix~\ref{proof:return}.
\end{proof}
It is intuitively clear that, to achieve good control performance,
$\Delta_i$ should be ``large''. This observation will be confirmed by the 
results established in the following section.


\section{Stability Analysis}
\label{sec:stochastic-stability}
 Due to the occurrence of random time-delays and dropouts, the plant input is
 also random. In particular, at all  
instants where  $\lambda(k)<N$, the plant is unavoidably left in open-loop; if
$\lambda(k)=0$, then 
$u(k)=\mathbf{0}_p$.   This observation motivates us to study stochastic
stability of the NCS model. 

\begin{defi}[Stochastic Stability]
  The NCS is stochastically
  stable, if for some $\varphi \in\mathscr{K}_\infty$, the plant state
  trajectory $\{x(k)\}_{k\in\N_0}$ satisfies
  $\sum_{k={0}}^{\infty}\E\big\{\varphi(|x(k)|)\big\} <\infty$.\hfs
\end{defi}


 Assumption~\ref{ass:bound_prob}  bounds the
rate of increase  
of  $V$ in~(\ref{eq:3b}), when the  plant input is zero. It also
imposes a restriction on 
  the distribution of the  initial plant state, see Section~\ref{sec:example}
  for an example.
\begin{ass}
  \label{ass:bound_prob}
  There exists $\alpha\in\R_{\geq 0}$
such that
  \begin{equation}
    \label{eq:14}
     V({f}(x,\mathbf{0}_p))\leq\alpha V(x),\quad\forall x \in\R^n,
   \end{equation}
   and
   $\E\big\{\varphi_2(|x(0)|)\big\}<\infty$, where $V$ and
$\varphi_2$ are as  in~(\ref{eq:3b}).\footnote{If $\alpha$   cannot be
  found for given $V$, then the current results are non-informative and the
  question of controller-redesign 
  arises.} \hfs
\end{ass}
 The results of Section~\ref{sec:i.i.d.-case} allow one to adopt a stochastic Lyapunov
function approach to study   stability of the NCS described
by~(\ref{eq:34}),~(\ref{eq:31}) and~(\ref{eq:7}). We begin by stating the
following lemma:
\begin{lem}
\label{lem:drift}
  Suppose that Assumptions~\ref{ass:CLF} to~\ref{ass:bound_prob} hold and
  consider $k_0,k_1\in\K$, see~\eqref{eq:8}. Then 
  \begin{equation*}
    \begin{split}
      \E&\big\{V(x({k_{1}}))\,\big|\,\theta(k_{0})=\vartheta\big\}\leq \alpha
      \sum_{j\in\N} \Prob\{ \Delta_i =j\} \rho^{j-1} V(\chi),\\
      &\forall
      \vartheta=
      \begin{bmatrix}
        \chi\\ \mathcal{U}
      \end{bmatrix}
      \in \R^{n+\nu p},
    \end{split}
\end{equation*}
where the distribution of $\{\Delta_i\}_{i\in\N_0}$ is given in~(\ref{eq:18}).
\end{lem}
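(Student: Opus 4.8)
The plan is to prove Lemma~\ref{lem:drift} by tracking the evolution of $V(x(k))$ over the random interval $[k_0,k_1)$ during which the buffer is nonempty, and then summing over the possible values of the first return time $\Delta_i = k_1 - k_0$.

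First I would condition on the event $\{\Delta_i = j\}$ for a fixed $j\in\N$. Recall that $k_0,k_1\in\K$ are consecutive instants at which $\lambda(k)=0$, so by~\eqref{eq:8} we have $\lambda(k_0)=0$, $\lambda(k_1)=0$, and crucially $\lambda(k)>0$ for all $k$ with $k_0<k<k_1$. This means that throughout the open interval the buffer is nonempty, so the applied input $u(k)=u(k;T(k))$ is one of the tentative controls generated by the nominal feedback law $\kappa$ in~\eqref{eq:31}. The key structural observation I would exploit is that when the buffer is fed by a packet $\vec u(T(k))$ whose predictions were built by iterating the nominally stabilizing map, the \emph{actual} plant state at those times coincides with the \emph{predicted} nominal state, because the predictions in~\eqref{eq:31} use exactly the plant recursion~\eqref{eq:34} with inputs $\kappa(\cdot)$. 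Consequently, at each step inside the interval where a genuine control sample is applied, Assumption~\ref{ass:CLF} gives the contraction $V(x(k+1))\leq \rho\, V(x(k))$.

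Next I would carry out the step count. On the event $\{\Delta_i = j\}$, exactly one of the $j$ transitions is the ``return to zero'' step, $\lambda(k_1-1)\geq 1 \to \lambda(k_1)=0$, at which the buffer empties and the input reverts to $\mathbf 0_p$; at that single step Assumption~\ref{ass:bound_prob} yields the bound $V(f(x,\mathbf 0_p))\leq \alpha\, V(x)$. The remaining $j-1$ transitions occur with a nonempty buffer and therefore each contribute a factor $\rho$. Chaining these inequalities from $k_0$ to $k_1$ gives, on $\{\Delta_i=j\}$,
\begin{equation*}
  V(x(k_1)) \leq \alpha\, \rho^{\,j-1}\, V(x(k_0)) = \alpha\,\rho^{\,j-1}\,V(\chi),
\end{equation*}
where I used $x(k_0)=\chi$ from $\theta(k_0)=\vartheta=[\chi^T\ \mathcal U^T]^T$. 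Taking conditional expectation and summing over $j\in\N$ against the distribution $\Prob\{\Delta_i=j\}$ from~\eqref{eq:18}, which by Lemma~\ref{lem:return} depends only on $\lambda(k_0)=0$ and not on $\chi$ or $\mathcal U$, produces the claimed inequality; the independence of $\{\Delta_i\}$ established via the first-return-time argument lets me factor the Lyapunov decay from the return-time distribution cleanly.

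The main obstacle I anticipate is the justification that the applied input over the interval really is the nominal control evaluated along the true trajectory, so that the per-step factor is exactly $\rho$ rather than something larger. Because the controller is \emph{UDP-like} with no acknowledgments and the buffer is fed by possibly delayed packets $\vec u(T(k))$ with $T(k)\leq k$, one must verify that whenever a tentative sample $u(k;T(k))$ is played, the prediction horizon and the buffering rule~\eqref{eq:5}--\eqref{eq:7} guarantee the predicted state used to compute that sample matches $x(k)$, i.e.\ that no stale or mismatched prediction corrupts the contraction. I would argue this by noting that on $\{k_0<k<k_1\}$ the most recent packet in the buffer was generated at a time whose predictions, fed forward by~\eqref{eq:31}, coincide with the realized states precisely because the same inputs were applied in between; a careful bookkeeping of the index $\eta$ and the selector $G_{\lambda(k)}$ in~\eqref{eq:16} confirms the played sample is the correct prediction, closing the gap. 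Care is also needed at the boundary step $k_1$ to use $\alpha$ rather than $\rho$, since there the input is zero and $V$ may grow.
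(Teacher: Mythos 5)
Your overall route is the same as the paper's: condition on $\{\Delta_0=j\}$, bound the $j$ one-step transitions on $[k_0,k_1]$ by one factor of $\alpha$ and $j-1$ factors of $\rho$, then average over $j$ with the law of total expectation. However, you have misplaced the $\alpha$ step, and as a consequence one link in your chain of inequalities is false. The zero input is applied at the instants where the buffer is \emph{currently} empty, i.e.\ where $\lambda(k)=0$; see~\eqref{eq:7}. Since $k_0\in\K$ means $\lambda(k_0)=0$, it is the \emph{first} transition $x(k_0)\to x(k_0+1)$ that uses $u(k_0)=\mathbf{0}_p$ and hence incurs the factor $\alpha$ via Assumption~\ref{ass:bound_prob}; this is exactly~\eqref{eq:26} in the paper's proof. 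Your claim that the first $j-1$ transitions each contract by $\rho$ therefore fails at the very first one: with $u(k_0)=\mathbf{0}_p$ and a possibly open-loop unstable plant, $V(x(k_0+1))\leq\rho\,V(x(k_0))$ does not hold in general. Conversely, the transition you tag as the ``return to zero'' step, $x(k_1-1)\to x(k_1)$, is driven by a buffered control sample for $j\geq 2$, since $\lambda(k_1-1)>0$; the empty buffer at $k_1$ only affects $u(k_1)$ and hence $x(k_1+1)$, which belongs to the \emph{next} inter-return interval. That step contracts by $\rho$ and needs no appeal to Assumption~\ref{ass:bound_prob}.

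Because multiplication commutes, your per-$j$ bound $V(x(k_1))\leq\alpha\rho^{j-1}V(\chi)$ is numerically identical to the correct one, and the rest of your argument --- the identification $x(k)=x(k;T(k))$ for $k_0<k<k_1$ (which the paper asserts in the same way), and the averaging against the distribution~\eqref{eq:18} --- matches the paper. The fix is a relabelling: assign $\alpha$ to the step leaving $k_0$ and $\rho$ to each of the $j-1$ subsequent steps. Getting the placement right matters beyond this lemma, since the proof of Theorem~\ref{theorem:a1_stability} explicitly reuses the first-step bound~\eqref{eq:26} when summing $V(x(k))$ over $k\in[k_i,k_{i+1})$.
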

\begin{proof}
See Appendix~\ref{proof:drift}.
\end{proof}

\begin{figure}[t] 
  \includegraphics[width=\columnwidth]{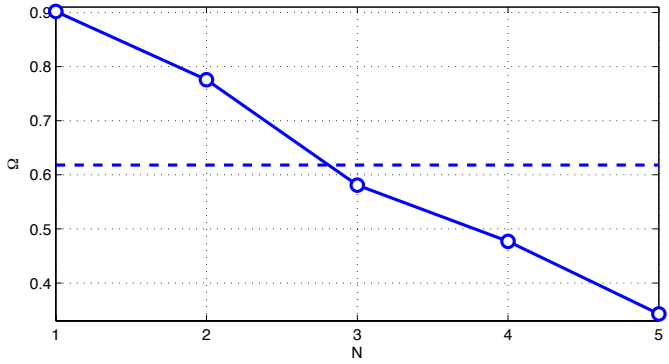} 
\caption{$\Omega$ as a function of  $N$. The horizontal line
      depicts the stability condition, $\Omega\leq
      1/\alpha \approx 0.618$.}
    \label{fig:OmegaN}
  \end{figure} 

Whilst, in general,
  $\{\theta(k)\}_{k\in \N_0}$ is not Markovian, $\{\theta(k_i)\}_{k_i\in\K}$ is
  Markovian and we can use Lemma~\ref{lem:drift} to
derive the following sufficient condition for stochastic stability of the NCS:
\begin{thm}
  \label{theorem:a1_stability}
  Suppose that Assumptions~\ref{ass:CLF} to~\ref{ass:bound_prob} hold and define 
\begin{equation}
  \label{eq:25}
  \Omega\eq  \bigg(1-q\sum_{\ell=0}^{N-1} p_\ell\bigg) \bigg(1+ 
 \rho \sigma^T \sum_{j\in\N_0}
       \big(\rho\mathcal{P}\big)^{j} \mathbf{1}_N\bigg)<\infty,
\end{equation}
see~(\ref{eq:28}).
If $\Omega<1/\alpha$, then the  NCS described
by~(\ref{eq:34}),~(\ref{eq:31}) and~(\ref{eq:7})  is stochastically stable.
\end{thm}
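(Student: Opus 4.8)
The plan is to leverage the Markovian structure of the embedded chain $\{\theta(k_i)\}_{k_i\in\K}$ together with the one-step drift bound already established in Lemma~\ref{lem:drift}. Since the buffer is empty at each $k_i\in\K$, we have $u(k_i)=\mathbf{0}_p$, and the NCS state restricted to these return instants is a genuine Markov chain driven by the i.i.d.\ return times $\{\Delta_i\}_{i\in\N_0}$. First I would apply Lemma~\ref{lem:drift} iteratively to obtain, for each $i\in\N_0$,
\begin{equation*}
  \E\big\{V(x(k_{i+1}))\,\big|\,\theta(k_i)\big\}\leq \Omega\,\alpha\, V(x(k_i)),
\end{equation*}
after identifying the scalar $\alpha\sum_{j\in\N}\Prob\{\Delta_i=j\}\rho^{j-1}$ appearing in the lemma with $\alpha\,\Omega$ via the distribution~\eqref{eq:18} from Lemma~\ref{lem:return}. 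Concretely, substituting~\eqref{eq:18} into the sum $\sum_{j\in\N}\Prob\{\Delta_i=j\}\rho^{j-1}$ and summing the geometric-type series over $j\geq 2$ should collapse to exactly the factor $\bigl(1-q\sum_{\ell=0}^{N-1}p_\ell\bigr)\bigl(1+\rho\sigma^T\sum_{j\in\N_0}(\rho\mathcal{P})^j\mathbf{1}_N\bigr)=\Omega$, which is finite because $\rho\in[0,1)$ and $\mathcal{P}$ is substochastic, so $\rho\mathcal{P}$ has spectral radius strictly below one and the Neumann series converges.

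Next I would iterate this contraction across the return instants. Taking total expectations and using the tower property together with the i.i.d.\ nature of $\{\Delta_i\}$ gives $\E\{V(x(k_i))\}\leq (\alpha\Omega)^i\,\E\{V(x(k_0))\}$. Under the hypothesis $\Omega<1/\alpha$ we have $\alpha\Omega<1$, so $\sum_{i\in\N_0}\E\{V(x(k_i))\}$ converges geometrically; the initial term is finite by Assumption~\ref{ass:bound_prob}, since $\E\{\varphi_2(|x(0)|)\}<\infty$ and $V(x(0))\leq\varphi_2(|x(0)|)$ from~\eqref{eq:3b}. This controls the Lyapunov function only along the subsequence $\K$, so the remaining and more delicate task is to bound the contributions at the intermediate times $k\in(k_i,k_{i+1})$.

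For the intermediate times, the idea is that within a single inter-return interval the plant evolves under control values drawn from the predictions~\eqref{eq:31}, and Assumptions~\ref{ass:CLF} and~\ref{ass:bound_prob} bound the per-step growth of $V$ by at most $\alpha$ (when the buffer is empty) or ensure contraction by $\rho$ (when a fresh prediction is applied). I would therefore bound $\sum_{k=k_i}^{k_{i+1}-1}\E\{V(x(k))\}$ in terms of $V(x(k_i))$ and the conditional law of $\Delta_i$, mirroring the computation inside the proof of Lemma~\ref{lem:drift}; summing this over $i$ against the already-summable sequence $\E\{V(x(k_i))\}$ yields $\sum_{k\in\N_0}\E\{V(x(k))\}<\infty$. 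Finally, invoking the lower bound $\varphi_1(|x|)\leq V(x)$ from~\eqref{eq:3b} and setting $\varphi=\varphi_1\in\mathscr{K}_\infty$ establishes $\sum_{k=0}^{\infty}\E\{\varphi(|x(k)|)\}<\infty$, which is precisely stochastic stability.

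The main obstacle I anticipate is the bookkeeping required to pass from the return-instant bound to a bound over all time steps while keeping the random interval lengths $\Delta_i$ properly coupled to the states $x(k_i)$; care is needed so that the conditioning structure in Lemma~\ref{lem:drift} is respected and the intermediate-time sum does not introduce a divergent factor. Verifying the algebraic identity that the drift constant equals $\alpha\Omega$ is routine but must be done carefully, as it is the step that converts the finiteness condition~\eqref{eq:25} into the usable contraction factor $\alpha\Omega<1$.
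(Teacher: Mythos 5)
Your proposal is correct and follows essentially the same route as the paper: identify $\Omega$ with the drift constant $\sum_{j\in\N}\Prob\{\Delta_i=j\}\rho^{j-1}$ from Lemma~\ref{lem:drift}, iterate the contraction $\alpha\Omega<1$ along the Markov chain $\{\theta(k_i)\}_{k_i\in\K}$, bound the intermediate-time sums uniformly by $(1+\alpha/(1-\rho))V(x(k_i))$, and conclude via $\varphi_1(|x|)\leq V(x)$. The only cosmetic difference is that you justify $\Omega<\infty$ via the spectral radius of $\rho\mathcal{P}$ whereas the paper simply bounds $\rho^{j-1}\leq 1$ in the probability-weighted sum; both are valid.
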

\begin{proof}
See Appendix~\ref{proof:thm}.
\end{proof}
The above result establishes  conditions on the network, plant and
controller, which guarantee that the closed loop in the presence of
 dropouts and delays will be stochastically stable.  It is worth noting that with
   SBC, control packets which are  delayed longer than $N-1$ time-steps are
 discarded. This is reflected in~(\ref{eq:25})  by the fact that $\Omega$
 depends on $p_0,\dots,p_{N-1}$, 
 $q$ and $\rho$ only. We also emphasize that the condition $\Omega \alpha<1$
 does not require contractiveness for 
  each step $k\in\N$. Instead, it only amounts contractiveness of the process  sampled
 at 
  $k\in\K$.

 \begin{rem}[Disturbances]
  The present work focuses on controlling a disturbance-free system
  model of the form~(\ref{eq:34}). Our current results can be extended to
  encompass non-zero 
disturbances, provided that continuity assumptions are imposed on the plant
model and the feedback law $\kappa$. Such an analysis was carried out for a
related problem in our
recent paper\cite{quenes12a}. By constraining sample-path
differences between the nominal and the perturbed model to be close in a sample
path sense, \cite{quenes12a} established the
boundedness of moments for the plant state process. Such an approach can be adopted
for the present control  formulation as well. \hfs
 \end{rem}

\begin{figure}[t] 
\includegraphics[width=\columnwidth]{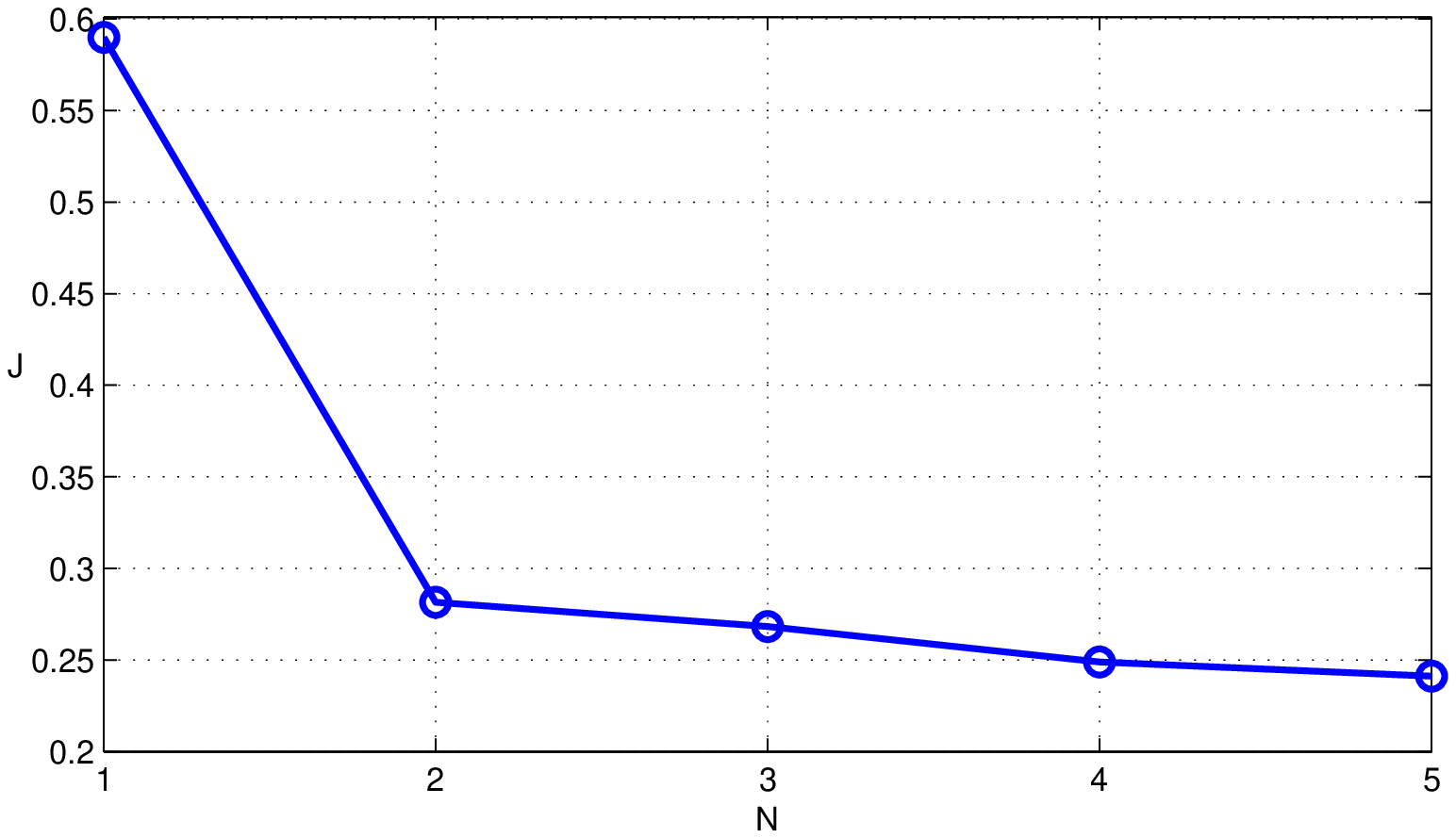} 
\caption{Empirical expectation of $J\eq(1/50)\sum_{k=0}^{49} |x(k)|^2$,   as a
  function of the  buffer length $N$.}  
\label{fig:J2}
\end{figure}

\section{Simulation Study}
\label{sec:example}
Consider 
an open-loop unstable constrained plant model of the form~\eqref{eq:34}, where
\begin{equation}
  \label{eq:78}
  f(x,u) =
  \begin{bmatrix}
    x_2+u_1\\-{\rm{sat}} (x_1+x_2) +u_2
  \end{bmatrix},
\end{equation}
with
\begin{equation*}
x=
\begin{bmatrix}
  x_1\\x_2
\end{bmatrix},\;
u=
\begin{bmatrix}
  u_1\\u_2
\end{bmatrix},\quad
  \rm{sat}(\mu)=
  \begin{cases}
    -1,&\text{if $\mu<-1$,}\\
    \mu & \text{if $\mu \in [-1,1]$,}\\
    1,&\text{if $\mu>1$},
  \end{cases}
\end{equation*}
see\cite[Example 2.3]{khalil96} and\cite{quenes12a}. The initial state is taken from a Gaussian
distribution with zero mean and variance $I_2$. We fix $|\cdot|$ as the Euclidean norm. 
 The second component of the  plant input is constrained via
$|u_2(k)|\leq 0.8$, $\forall k\in\N_0$, thus, $\U=\R\times [-0.8,0.8]$. 
\par The    model~\eqref{eq:78} can be globally stabilized (in the absence of network effects) by the constrained  control law $\kappa\colon
\R^2\to\U$, where 
$$\kappa(x)=\big[\begin{matrix}
   - x_2& 0.8 {\rm{sat}} (x_1+x_2)
  \end{matrix}\big]^T.$$
In fact, with $V(x)=2|x|$,  direct calculations give
\begin{equation*}
  \begin{split}
    V\big(f(x&,\kappa(x)) \big) =
  0.4 | {\rm{sat}} (x_1+x_2)| \leq  0.4 | x_1+x_2|\\
  &\leq 0.8\max\{|x_1|,|x_2|\}-\max\{|x_1|,|x_2|\}+|x|\leq |x|.
  \end{split}
\end{equation*}
Thus, Assumption~\ref{ass:CLF} holds with $\rho = 1/2$, and
$\varphi_1(s)=\varphi_2(s)=2s$. Furthermore, by proceeding as
in\cite[p.73]{khalil96}, it can be shown that~(\ref{eq:14})
holds with 
$\alpha =1.618$.

The network introduces i.i.d.\ delays and dropouts satisfying
Assumption~\ref{ass:iid}, with $q=0.9$, $p_0=0.2$, $p_1=p_2=0.25$,
$p_3=p_4=0.1$, $p_5=p_\infty=0.05$.
Fig.~\ref{fig:OmegaN} depicts the associated quantity $\Omega$,
see~\eqref{eq:25}, as a function of the horizon length (buffer size) $N$. By recalling 
Theorem~\ref{theorem:a1_stability}, the figure illustrates that using larger
horizons is beneficial for guaranteeing stability. In particular, the NCS is
stochastically  stable for horizon lengths
$N\geq 3$.




Fig.~\ref{fig:J2} compares the empirical expectation of $|x(k)|^2$ as a
function of $N$; results are averaged out
over 100 realizations of the initial state and transmission outcome realizations. It can be seen that performance improves monotonically
with the buffer length.

\section{Conclusions}
\label{sec:conclusions} 
We have studied a  control formulation for  non-linear constrained
plant models where communications between controller and plant are affected by
random dropouts and delays. To compensate for  transmission effects,
the controller transmits sequences of tentative
plant inputs to a buffer   at the actuator node. By developing a NCS model
which makes the size of the buffer contents explicit, we have established
sufficient conditions for stochastic stability when dropouts and delays are
i.i.d. Future work of interest includes studying temporally and spatially
correlated 
time-delays and dropouts. For that purpose, we foresee that the models developed
in\cite{queahl12,queahl13a} 
could serve as a starting point.

\bibliography{/Users/daniel/Dropbox/dquevedo}
\appendices

\appendix

\subsection{Proof of Lemma~\ref{lem:Markov}}
\label{proof:markov}
 We first note that transitions corresponding  to $i \geq j+2$ will never occur,
since the buffer can only loose, at most,  one control value in one transition, thus,
$p_{ij} = 0$, for $i \geq j+2$.   

\par For the cases $1\leq i\leq j$, the buffer is never empty and  some control
sequence generated after $T(k)$ is received at  instant $k+1$.  
Since $\lambda(k)=N-\ell>0$ if and only if $\tau(k-\ell)\leq \ell$
and $\tau(k-\ell+i)>\ell-i$ for all $i\in\{1,2,\dots,\ell\}$, a careful
case-by-case analysis considering all the 
possibilities and taking into account that the size of the buffer is $N$ reveals that 
 \begin{equation*}
    \begin{split}
      &p_{ij}=\Prob\{\lambda(k+1)=j\,|\,\lambda(k)=i\}\\
      &=\Prob\big\{\tau({j-N+k+1})\leq
      N-j \\
      &\qquad \big|\,\tau({j-N+k+1})> N-j-1\big\}\\
      &\quad\times\prod_{\ell =
        2}^{N+1-j}\Prob\Big\{\tau(j-N+k+\ell)> N-j-\ell
      +1\\ 
      &\qquad\qquad\big| \,\tau(j-N+k+\ell)> N-j-\ell\Big\}\\
        &=\frac{\Prob\{\tau({j-N+k+1})=N-j\}}{\Prob\{\tau({j-N+k+1})>N-j-1\}}
\\ &\quad\prod_{\ell = 2}^{N+1-j}\!\frac{\Prob\{\tau({j-N+k+\ell})>N-j-\ell
        +1\}}{\Prob\{\tau({j-N+k+\ell})>N-j-\ell\}}=q\cdot p_{N-j} 
    \end{split}
  \end{equation*}
since $\{\tau(k)\}_{k\in \N_0}$ is i.i.d.
\par The case $j=i-1$ refers to the event that no ``fresh'' control sequence arrives to the
buffer at the   
instant $k+1$. The associated transition probabilities satisfy: 
  \begin{equation*}
    \begin{split}  
     p_{ij}&=\Prob\{\lambda(k+1)=j\,|\,\lambda(k)=i\}\\
     &= \Prob\big\{\tau({k-N+1+j})\leq
     -N+2+j\\
  &\qquad\qquad \big|\,\tau({k-N+1+j})\leq -N+1+j\big\}\\
     &\quad\times\prod_{\ell =
       0}^{N-1-j}\Prob\{\tau({k+1-\ell})> \ell \,|\,\tau({k+1-\ell})>
     \ell-1\}\\
      &=\prod_{\ell = 0}^{N-1-j}\frac{\Prob\{\tau({k+1-\ell})>
        \ell\land \tau({k+1-\ell})> \ell-1\}}{\Prob\{\tau({k+1-\ell})>
        \ell-1\}}\\
      &=\prod_{\ell =
      0}^{N-1-j}\frac{\Prob\{\tau({k+1-\ell})>\ell\}}{\Prob\{\tau({k+1-\ell})>\ell
      - 1\}} \\
    &= \prod_{\ell =
      0}^{N-1-j}\frac{1-q\sum_{m=0}^{\ell}p_{m}}{1-q\sum_{m=0}^{\ell-1}p_{m}}
    = 1-q\sum_{\ell=0}^{N-1-j}p_{\ell} = 1-q\sum_{\ell=0}^{N-i}p_{\ell}.
    \end{split}
  \end{equation*}
  Finally, for the case $i = 0$, we  consider two different events, namely, $j =
  0$ and $j > 0$. Direct calculations give that  
  \begin{equation*}
    \begin{split}  
      p_{00}&=\Prob\{\lambda(k+1)=0\,|\,\lambda(k)=0\}\\
      &=
      \prod_{\ell =0}^{N-1}\Prob\{\tau({k+1-\ell})>\ell\,|\,
      \tau({k+1-\ell})>\ell -
      1\}\\
        &=
    \prod_{\ell =
      0}^{N-1}\frac{\Prob\{\tau({k+1-\ell})>\ell\}}{\Prob\{\tau({k+1-\ell})>\ell
      - 1\}} = p_{10}.
    \end{split}
  \end{equation*} 
  On the other hand, for $j > 0$, we obtain 
  \begin{equation*}
    \begin{split}
      p_{0j}&=\Prob\{\lambda(k+1)=j\,|\,\lambda(k)=0\}\\
      &=\frac{\Prob\{\tau({j-N+k+1})=N-j\}}{\Prob\{\tau({j-N+k+1})>N-j-1\}}\\
&\quad \times \prod_{\ell
        = 2}^{N+1-j}\frac{\Prob\{\tau({j-N+k+\ell})>N-j-\ell
        +1\}}{\Prob\{\tau({j-N+k+\ell})>N-j-\ell\}}
      = p_{1j},
    \end{split}
  \end{equation*}
which proves the result.

\subsection{Proof of Lemma~\ref{lem:return}}
\label{proof:return}
The proof is akin to that of Lemma 2 of\cite{quegup11a}. The case $j=1$ is immediate, since
 $$     \Prob\{\Delta_i=1\}=\Prob\{\lambda(k+1)=0\,|\,\lambda(k)=0\}=p_{00},$$
see~(\ref{eq:21}).
For $j\geq 2$, we analyze the state trajectory of the Markov Chain
$\{\lambda(k)\}_{k\in\N_0}$ and proceed
  as follows: For all $m\in\{1,\dots,N\}$,  denote by $\delta_m$  the
  first passage time of the state 
  $m$ to state $0$. Thus, $\delta_m$ are discrete random
  variables, where $\delta_m=j$, if $\{\lambda(k)\}_{k\in\N_0}$ enters the state
  $0$  from $m$ for the 
  first time in exactly $j$ steps. Since, with exception of state $0$, only the state ${1}$
  can reach $0$ in one step, it is easy to see that
  \begin{equation}
    \label{eq:25b}
       \Prob\{\delta_m = 1\}= p_{m0}=
     \begin{cases}
      1-q\sum_{\ell=0}^{N-1} p_\ell &\text{if $m=1$}\\
      0 &\text{if $m>1$,}
    \end{cases}
\end{equation}  
where we have used~\eqref{eq:21}.
For $j\geq 2$, paths from $m\not = 0$ to $0$ go through intermediate states $\ell\not = 0$,
providing the recursions
$$  \Prob\{\delta_m = j\} =\sum_{\ell =1}^{N} p_{m\ell} \Prob\{\delta_\ell =
  j-1\}, \forall m .$$ Therefore,
  \begin{equation*}
    \begin{bmatrix}
      \Prob\{\delta_1 = j\}\\
      \vdots\\
      \Prob\{\delta_{N} = j\}
    \end{bmatrix}
  = {\mathcal{P}}
    \begin{bmatrix}
      \Prob\{\delta_1 = j-1\}\\
      \vdots\\
      \Prob\{\delta_{N} = j-1\}
    \end{bmatrix},\; \quad \forall j\geq 2,
  \end{equation*}
 yielding the explicit formula:
\begin{equation}
  \label{eq:9}
  \begin{bmatrix}
      \Prob\{\delta_1 = j\}\\
      \vdots\\
      \Prob\{\delta_{N} = j\}
    \end{bmatrix}
  = {\mathcal{P}}^{j-1} \begin{bmatrix}
      \Prob\{\delta_1 = 1\}\\
      \vdots\\
      \Prob\{\delta_{N} = 1\}
    \end{bmatrix}
= \bigg(1-q\sum_{\ell=0}^{N-1} p_\ell\bigg){\mathcal{P}}^{j-1}\mathbf{1}_N,
\end{equation}
which by virtue of~(\ref{eq:25b}) holds, not only for $j\geq 2$, but also for $j=1$.
Thus, the distribution of $\{\Delta_i\}$  can be
 obtained from those of $\{\delta_m\}$ 
  by considering the transitions away
   from $0$:
  \begin{equation*}
    \begin{split}
      \Prob\{\Delta_i=j\}&=\sum_{\ell =1}^{N} p_{0\ell} \Prob\{\delta_\ell =
      j-1\}=\sigma^T\begin{bmatrix}
        \Prob\{\delta_1 = j-1\}\\
        \vdots\\
        \Prob\{\delta_{N} = j-1\}
      \end{bmatrix}\\
      &= \bigg(1-q\sum_{\ell=0}^{N-1}
      p_\ell\bigg)\sigma^T{\mathcal{P}}^{j-2}\mathbf{1}_N,
    \end{split}
\end{equation*}
for all $j\geq 2$, where we have used~\eqref{eq:21} and~(\ref{eq:9}).
\hfs

\subsection{Proof of Lemma~\ref{lem:drift}}
\label{proof:drift}
 Assumption~\ref{ass:bound_prob} gives that
  \begin{equation}
    \label{eq:26}
    V(x(k_0+1))\leq \alpha  V(x(k_0)),\quad \forall x(k_0)\in\R^n.
  \end{equation}
On the other hand, if $k \not\in\K$, then $\lambda(k)>0$ and the plant
input stems from 
current or previously calculated control sequences. Since the control input
sequences are constructed 
following~(\ref{eq:31}), in the
disturbance-free case considered, $x(k)=x(k;T(k))$, where $T(k)$ is as
in~(\ref{eq:3}). Therefore,~(\ref{eq:3b})  gives that
$ V(x(k+1))\leq \rho  V(x(k))$, $\forall k \not \in \K$.
Thus,
\begin{equation*}
  \begin{split}
    \E&\big\{V(x({k_{1}}))\,\big|\,\theta(k_{0})=\vartheta,\Delta_0=j\big\} \leq
    \rho^{j -1}\alpha V(\chi),\\
    &\quad\forall \vartheta=
    \begin{bmatrix}
      \chi\\ \mathcal{U}
    \end{bmatrix}
    \in \R^{n+\nu p}.
  \end{split}
\end{equation*}
Since $\{\Delta_i\}_{i\in\N_0}$ is i.i.d., the result follows directly from
the law of total expectation.  \hfs

\subsection{Proof of Theorem~\ref{theorem:a1_stability}}
\label{proof:thm}
We first note that $$\Omega=\sum_{j\in\N} \Prob\{ \Delta_i =j\} \rho^{j
  -1}.$$ Since $\rho\in[0,1)$, we have $\Omega <\infty$.  Furthermore, 
 $\{\theta(k_i)\}_{k_i\in\K}$ is Markovian. Thus,
Lemma~\ref{lem:drift} and ~\cite[Chapter
  8.4.2, Theorem 2]{kushne71} yield exponential stability at instants
  $k_{i}\in\mathcal{K}$, i.e.,  
  \begin{equation*}
    \E\{V(x({k_{i}}))\,|\,\theta(k_{0})=\vartheta\}
    \leq\big(\alpha \Omega\big)^{i}V(\chi),\quad\forall \vartheta=\!
 \begin{bmatrix}
   \chi\\ \mathcal{U}
 \end{bmatrix}
\!\in \R^{n+\nu p}\!.
  \end{equation*}
For the time instants $k\in\N\backslash \K$, i.e., where $\lambda(k)>0$, the
plant inputs stem from previously calculated control sequences. Thus, we can use~(\ref{eq:3b})
and~(\ref{eq:26}) to bound
\begin{equation*}
  \begin{split}
    \E&\Bigg\{\sum_{k=k_{i}}^{k_{i+1}-1}V(x({k}))\,\bigg|\,x(k_{i})=\chi_i,\Delta_i=j\Bigg\}
    \leq \Bigg(1+ \alpha\sum_{l=0}^{j-2} \rho^l\Bigg) V(\chi_i)\\
    &\leq \bigg(1
    +\frac{\alpha}{1-\rho}\bigg)V(\chi_i),
  \end{split}
\end{equation*}
so that, by the law of total expectation,
\begin{equation*}
  \E\Bigg\{\sum_{k=k_{i}}^{k_{i+1}-1}V(x({k}))\,\bigg|\,\theta(k_{i})=\vartheta_i\Bigg\} 
  \leq 
 \frac{1+\alpha-\rho}{1-\rho}V(\chi_i),\quad \forall \vartheta_i=
 \begin{bmatrix}
   \chi_i\\ \mathcal{U}_i
 \end{bmatrix}.
\end{equation*}
The result follows from  taking conditional expectation  $\E\{\, \cdot\,|\,
\theta(k_0)\}$, use  of the Markovian property of
  $\{\theta(k_i)\}_{k_i\in\K}$, and proceeding, \emph{mutatis mutandis}, as in the proof 
of \cite[Thm.1]{quegup11a}.  

\end{document}